\newtheorem{theorem}{Theorem}[section]
\newtheorem{lemma}[theorem]{Lemma}
\newtheorem{problem}[theorem]{Problem}
\newtheorem{corollary}[theorem]{Corollary}
\theoremstyle{definition}
\theoremstyle{remark}
\numberwithin{equation}{section}
\renewcommand{\dim}{\mathrm{dim}}
\newcommand{\R}{\mathbb{R}}
\newcommand{\N}{\mathbb{N}}
\newcommand{\X}{\mathrm{X}}
\newcommand{\B}{\mathbf{B}}
\renewcommand{\S}{\mathbf{S}}
\begin{document}

\title{Observations on quasihyperbolic geometry modeled on Banach spaces}

\author{Antti Rasila, Jarno Talponen, Xiaohui Zhang}

\address{Department of Mathematics and Systems Analysis, Aalto University, P.O. Box 11100, FI-00076 Aalto, Finland}
\email{antti.rasila@iki.fi}
\address{University of Eastern Finland, Department of Physics and Mathematics, Box 111, FI-80101 Joensuu, Finland}
\email{talponen@iki.fi}
\address{School of Science, Zhejiang Sci-Tech University, 310018 Hangzhou, China}
\email{xhzhang31415926@126.com}

\keywords{Quasihyperbolic metric, geodesic, Banach space}
\subjclass[2010]{30F45, 30L99, 46T05, 30C80}
\date{\today}

\begin{abstract}
In this paper, we continue our study of quasihyperbolic metric in Banach spaces. The main results of the paper present a criterion for smoothness of 
geodesics of quasihyperbolic type metrics in Banach spaces, under a Dini type condition on the weight function, which improves an earlier result of the two first authors. We also answer to a question posed by the two first authors in an earlier paper with R. Kl\'en, and present results related to the question on smoothness of quasihyperbolic balls.
\end{abstract}

\maketitle

\section{Introduction}

The quasihyperbolic metric in the $n$-dimensional Euclidean space $\R^n$ is a natural generalization of the hyperbolic metric, which was first introduced by F.W. Gehring and his students Palka \cite{GehringPalka76} and Osgood \cite{GehringOsgood79} in 1970's. The significance of this metric arises from its several useful properties. In particular, quasihyperbolic metric is generally well-behaved in quasiconformal mappings and related classes of transformations, and it naturally arises in the generalization of the Schwarz-Pick lemma for quasiconformal mappings in $\R^n$. In addition, bounds for this metric can be obtained by using the {\it distance ratio metric}, a quantity that has a simple and natural definition and is easy to compute.

Notably, unlike the conformal modulus and related approaches relying on $n$-dimensional volume integration, the quasihyperbolic metric can be defined in a wide range of metric spaces, which includes infinite dimensional Banach spaces. This observation has led to the concept of {\it (dimension) free quasiconformality}, which was developed by V\"ais\"al\"a (see \cite{Vai99}). The definition is based on the quasiconformal Schwarz-Pick lemma, where the Gr\"otzsch modulus function is replaced with an arbitrary strictly increasing function of the non-negative real numbers onto itself. This definition coincides with other definitions of  quasiconfromality in  $\R^n$. It has led to study of this class of mappings in Banach spaces, but it is also workable in a more general metric settings (see \cite{hrwz}). Besides its role in the theory of quasiconformal mappings, the quasihyperbolic metric is also related to domain classification problems, which have independent interest and applications related to certain function spaces and partial differential equations.

Because of these important applications, there has been significant interest in the quasihyperbolic metric itself, and related concepts such as quasihyperbolic geodesics and balls. However, analytic properties of such objects are usually not easy to see directly from the definition of the quasihyperbolic distance. In this context, it is generally assumed that the quasihyperbolic metric would exhibit behavior similar to the hyperbolic metric, at least in a sufficiently small scale. For example, the quasihyperbolic metric is conformal in the sense that small quasihyperbolic balls are geometrically close to the balls of the norm metric.

The first two authors have investigated these questions in a series of articles \cite{KRT2,RasilaTalponen12,RT}, the latests of which is joint work with R. Kl\'en. The purpose of this paper is to further refine and improve this line of research by presenting a more precise condition for smoothness of  geodesics  of quasihyperbolic type weighted metrics, giving additional results on tangential properties of quasihyperbolic balls and, finally, presenting some open questions related to the connection of the quasihyperbolic metric with the geometry of the underlying Banach space.

\section{Preliminaries}

In this section, we recall certain basic definitions required to formulate our main results. We refer to the monographs 
in the references for suitable background information.

\subsection{Quasihyperbolic metric}
Let $\X$ be a Banach space with $\dim \X\ge 2$, and suppose that $\Omega \subsetneq \X$ is a domain.  For $x\in \Omega$, we denote by $d(x)$ distance $d(x,\partial \Omega)$. Then the \emph{quasihyperbolic length} of a rectifiable arc $\gamma$ in $\Omega$ is defined by
\[
\ell_k(\gamma):=\int_{I} \frac{\|d \gamma\|}{d(\gamma(t))}.
\]
The \emph{quasihyperbolic distance} of two points $x,y\in \Omega$ is the number
\[
k(x,y):=k_\Omega(x,y):=\inf_\gamma \ell_k(\gamma)
\]
where the infimum is taken over all rectifiable arcs $\gamma$ joining the points $x,y\in\Omega$.

Next we recall the following definitions that are central in the geometry of Banach spaces. 

\subsection{Uniform convexity}
 The \emph{modulus of convexity} $\delta_{\X}(\epsilon),\ 0<\epsilon \leq 2,$ is defined by
\[\delta_{\X}(\epsilon):=\inf\{1-\|x+y\|/2:\ x,y\in \X,\ \|x\|=\|y\|=1,\ \|x-y\|=\epsilon\},\]
A Banach space $\X$ is called \emph{uniformly convex} if $\delta_{\X}(\epsilon)>0$ for all $\epsilon>0$, and \emph{uniformly smooth}  if 
\[
\lim_{\tau\to 0^{+}}\frac{\rho_{\X}(\tau)}{\tau}=0.
\]
Furthermore, we call a a space $\X$ uniformly convex of power type $p\in [2,\infty)$ if $\delta_{\X}(\epsilon)\geq K\epsilon^{p}$, for some $K>0$. Note that the modulus $\delta_X$ measures the convexity of the unit ball. A set $C$ is {\em strictly convex} if it is convex and
$d(sx+(1-s)y,\partial C)>0$ for all $x,y\in \partial C,\ x\neq y,$ and $0<s<1$.

\subsection{Uniform smoothness}
The \emph{modulus of smoothness} $\rho_{\X}(\tau),\ \tau>0$ is defined by 
\[\rho_{\X}(\tau):=\sup\{(\|x+y\|+\|x-y\|)/2 -1 :\ x,y\in \X,\ \|x\|=1,\ \|y\|=\tau\}.\]
Again, we call a a space $\X$ {\it uniformly smooth} of power type $p\in [1,2]$ if $\rho_{\X}(\tau)\leq K\tau^p$, for some $K>0$. 

\subsection{LUR spaces} Recall that $\X$ is {\it locally uniformly rotund} (LUR) if for all $x,x_n \in \S_\X$,  $n\in\N$,
with $\lim_{n\to\infty} \|\frac{x+x_n }{2}\| = 1$ it follows that
\[
\lim_{n\to\infty} \| x_n - x\|=0.
\]
Note that a uniformly convex space is LUR and this in turn implies that the space is strictly convex.

By following the argument of the main lemma of \cite{MartioVaisala}, we see the following
useful observations. If $\lambda$ and $\gamma$ are rectifiable paths with unit quasihyperbolic speed in a convex domain
of a strictly convex space and
\begin{equation}\label{eq:  unit}
\left\|\frac{\lambda' + \gamma'}{2d(\frac{\lambda+\gamma}{2})}\right\|=1
\end{equation}
then there is a representation
\begin{equation}\label{eq:  presentation}
\lambda' (t)= F(t) d(\lambda(t) ),\quad \gamma' (t) = F(t) d(\gamma(t))
\end{equation}
at points $t$ of differentiability of these paths. Here $F(t)$ is a norm-$1$ vector. Moreover,
\begin{multline*}
\left\|\frac{\lambda' + \gamma'}{2d(\frac{\lambda+\gamma}{2})}\right\|
\leq \left\|\frac{\lambda' + \gamma'}{2\frac{d(\lambda)+d(\gamma)}{2}}\right\|
\leq \frac{\|\lambda' \|+ \|\gamma' \|}{d(\lambda)+d(\gamma)}\\
=\left\|\frac{F (d(\lambda) + d(\gamma))}{d(\lambda)+d(\gamma)}\right\|
=\|F\|
=\left\|\frac{F+F}{2}\right\| = \left\| \frac{\frac{\lambda'}{d(\lambda)}+\frac{\gamma'}{d(\gamma)}}{2}\right\|.
\end{multline*}
The first inequality follows from the concavity of the distance function and the first equality from
\eqref{eq:  presentation}.
Suppose that $\lambda$ and $\gamma$ have unit quasihyperbolic speed. Then we have
\[\left\|\frac{\lambda'}{d(\lambda)}+\frac{\gamma'}{d(\gamma)}\right\|
\leq \frac{\|\lambda'\|}{d(\lambda)}+\frac{\|\gamma'\|}{d(\gamma)}
=\frac{d(\lambda)}{d(\lambda)}+\frac{d(\gamma)}{d(\gamma)}=2.\]
The geometric interpretation of these facts is that the quasihyperbolic length of point-wise average of paths is dominated by the average of the quasihyperbolic lengths of the
mentioned paths. If $\X$ is a strictly convex space then the above readily yields that quasihyperbolic geodesics are unique and that quasihyperbolic balls are strictly convex.

\begin{lemma}\label{lm: LUR}
Let $\X$ be a LUR Banach space. Suppose that $x,y_n \in \X$, $x\neq 0$, $n\in\N$, are vectors satisfying
\[\lim_{n\to\infty} \|x\|+ \|y_n \| - \|x+y_n \| =0\]
where $(y_n )$ is a norm bounded sequence.
Then $d(y_n ,[x])\to 0$ as $n\to\infty$.
\end{lemma}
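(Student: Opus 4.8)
The plan is to argue by contradiction, reducing the near-equality in the triangle inequality to the midpoint near-equality that appears verbatim in the definition of LUR. So I would begin by supposing the conclusion fails: there are $\epsilon>0$ and a subsequence (not relabelled) with $d(y_n,[x])\ge\epsilon$ for all $n$. Since $0\in[x]$, this forces $\|y_n\|\ge\epsilon$, and together with the assumed boundedness I fix $M$ with $\epsilon\le\|y_n\|\le M$ for all $n$.

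Next I would normalize. Put $a=\|x\|>0$, $b_n=\|y_n\|$, $u=x/a\in\S_\X$, $v_n=y_n/b_n\in\S_\X$, and $t_n=a/(a+b_n)\in[a/(a+M),\,a/(a+\epsilon)]\subseteq(0,1)$; in particular $\min(t_n,1-t_n)\ge c$ for some constant $c>0$ independent of $n$. Since $\|x+y_n\|\le a+b_n$, the hypothesis reads $\eta_n:=a+b_n-\|x+y_n\|\to0$ with $\eta_n\ge0$. From $x+y_n=(a+b_n)\bigl(t_nu+(1-t_n)v_n\bigr)$ one gets
\[
\bigl\|t_nu+(1-t_n)v_n\bigr\|=1-\frac{\eta_n}{a+b_n}=:1-\delta_n,\qquad \delta_n\le\frac{\eta_n}{a}\to0 .
\]

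The key step is then a one-variable convexity argument. The map $s\mapsto g_n(s):=\|su+(1-s)v_n\|$ is convex on $[0,1]$ with $g_n(0)=g_n(1)=1$ and $g_n(t_n)=1-\delta_n$. Writing $t_n$ (or, when $t_n>1/2$, the value $1/2$) as a convex combination of $1/2$ and a suitable endpoint and invoking convexity gives $1-g_n(1/2)\le \delta_n/(2\min(t_n,1-t_n))\le\delta_n/(2c)\to0$; since also $g_n(1/2)\le1$, I conclude $\bigl\|\tfrac{u+v_n}{2}\bigr\|\to1$. Now the LUR property of $\X$, applied at the fixed point $u\in\S_\X$ with the sequence $(v_n)\subseteq\S_\X$, yields $\|v_n-u\|\to0$. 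Finally $b_nu=(b_n/a)x\in[x]$, so
\[
d(y_n,[x])\le\|y_n-b_nu\|=b_n\|v_n-u\|\le M\|v_n-u\|\to0 ,
\]
contradicting $d(y_n,[x])\ge\epsilon$. Hence $d(y_n,[x])\to0$.

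I expect the only real obstacle to be the convexity step: one must check that near-equality in the triangle inequality at the \emph{unbalanced} weight $t_n$ propagates to near-equality at the \emph{balanced} weight $1/2$, and this is precisely where the ratios $t_n$ must be kept bounded away from $0$ and $1$ — guaranteed by the norm boundedness of $(y_n)$ together with the lower bound $\|y_n\|\ge\epsilon$ that the contradiction hypothesis supplies. Everything else — the normalization and the final distance estimate — is routine bookkeeping; passing instead to a sub-subsequence along which $b_n$ converges would also work but is unnecessary.
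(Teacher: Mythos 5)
Your proof is correct. It takes a genuinely different route from the paper's. The paper's argument is dual: it picks Hahn--Banach support functionals $f_n$ with $f_n(x+y_n)=\|x+y_n\|$, splits $x+y_n$ into the component $f_n(x+y_n)x$ along $[x]$ plus a remainder on which $f_n$ tends to $0$, extracts a sub-subsequence along which the scalar $f_n(x+y_n)$ converges to some $c$, and applies the LUR condition at $cx$ to conclude $x+y_{n_{k_j}}\to cx$, hence $y_{n_{k_j}}\to (c-1)x\in[x]$. You instead stay entirely on the primal side: after the (necessary) reduction by contradiction to the case $\|y_n\|\geq\epsilon$, you normalize to $u,v_n\in\S_\X$, read the hypothesis as saying that the convex combination $t_nu+(1-t_n)v_n$ has norm $1-\delta_n$ with $\delta_n\to0$, and transfer this deficit to the midpoint by one-variable convexity of $s\mapsto\|su+(1-s)v_n\|$ — the transfer constant $1/(2\min(t_n,1-t_n))$ being uniformly controlled precisely because $\epsilon\leq\|y_n\|\leq M$ pins $t_n$ inside a compact subinterval of $(0,1)$. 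Your version has the advantage of applying the LUR definition verbatim (both vectors on the unit sphere, the fixed vector being $u=x/\|x\|$) and of avoiding both duality and the sub-subsequence extraction; the paper's version, in exchange, identifies the actual limit $(c-1)x$ of $y_n$ along subsequences rather than only the distance to $[x]$, and does not need an argument by contradiction. One cosmetic remark: in the convexity step, in \emph{both} cases $t_n\leq 1/2$ and $t_n>1/2$ one writes $t_n$ itself as a convex combination of $1/2$ and the nearer endpoint ($0$ or $1$, respectively); your parenthetical about writing ``the value $1/2$'' as a convex combination is a slip of the pen, but the bound $1-g_n(1/2)\leq\delta_n/(2\min(t_n,1-t_n))$ you state is exactly right.
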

\begin{proof}
Let $x$ and $y_n$ be vectors as above. Without loss of generality $\|x\|=1$.
According to the Hahn-Banach theorem we may fix
$f_n \in \X^*$, $\|f_n \|=1$, such that $f_n (x+ y_n )=\|x+y_n \|$ for $n\in\N$.
Then $f_n (x) \to 1$ and $f_n (y_n ) - \|y_n \| \to 0$ as $n\to\infty$.

Write $(f_n \otimes x)(z)=f_n (z)x$, $z\in \X$. Note that
\[\|(f_n \otimes x)(x+y_n )\|=|f_n (x+y_n )| \|x\| =\|x+y_n\|.\]
Observe that
\[x +y_n = f_n (x + y_n )x -(f_n (x+y_n )x -(x+ y_n ))\] 
where
\[f_n ((f_n (x+y_n )x -(x+ y_n )))=f_n (x+y_n )f_n (x) - f_n (x+y_n ) \to 0 .\]

From any subsequence $(n_k )$ we may pick a further subsequence $(n_{k_j} )$ such that
$f_{n_{k_j}}(x + y_{n_{k_j} })\to c$ as $n\to\infty$. Then, using the LUR condition with $cx$ in place of $x$,
and noting that
\[\left\|\frac{cx + (x+ y_{n_{k_j}})}{2}\right\|\to \|cx\|,\quad j\to\infty\]
yields that $x + y_{n_{k_j} } \to cx$. This suffices for the statement of the lemma, since $(n_k )$
was arbitrary.
\end{proof}

The following lemma is from \cite{AFT}.

\begin{lemma}\cite{AFT}\label{lem:inequal_series}
let $\lambda$ be a positive real number and let $\sum_{k=0}^\infty x_k$ be a convergent series with non-negative terms. Suppose that
$$\lambda x_n \geq \sum_{k=n+1}^\infty x_k, \quad (n=0, 1, 2, \cdots) $$
Then, for $0<\alpha\leq1$ we have
$$\sum_{k=0}^\infty x_k^\alpha \leq \frac{1}{(\lambda+1)^\alpha-\lambda^\alpha}\left(\sum_{k=0}^\infty x_k\right)^\alpha$$
with equality in the case $x_k=(\lambda/(\lambda+1))^k, k\geq0$.
\end{lemma}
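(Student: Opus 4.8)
The plan is to prove Lemma~\ref{lem:inequal_series} by a direct extremal analysis: we want to maximize $\sum_k x_k^\alpha$ subject to the fixed value $S:=\sum_k x_k$ and the chain of constraints $\lambda x_n \geq \sum_{k>n} x_k$. Write $R_n := \sum_{k\ge n} x_k$ for the tails, so $R_0 = S$, the constraint reads $\lambda(R_n - R_{n+1}) \ge R_{n+1}$, i.e. $R_{n+1} \le \tfrac{\lambda}{\lambda+1} R_n$, and $x_n = R_n - R_{n+1}$. Thus every feasible sequence has tails decaying at least geometrically with ratio $q:=\lambda/(\lambda+1)$, which already guarantees convergence of $\sum x_k^\alpha$ for any $\alpha>0$ and shows the extremal candidate $x_k = q^k(1-q)\cdot$(const) is feasible with equality throughout.

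First I would reduce to finitely many variables and a scaling normalization: fix $S=1$, and observe that $\sum_k x_k^\alpha$ as a function on the (convex, compact in the product topology) set of feasible tail-sequences attains its maximum; by a perturbation/exchange argument I would argue the maximizer saturates each inequality $R_{n+1} = q R_n$. The intuition is that $t \mapsto t^\alpha$ is concave, so for fixed total mass it pays to ``spread mass toward later indices'' as much as the constraints allow, and the constraints permit exactly the geometric profile. Concretely, if some constraint is slack at index $n$, one can move a small amount of mass from $x_n$ to the block $\{x_{n+1}, x_{n+2}, \dots\}$ (keeping that block internally geometric), and a short computation with the concavity of $x^\alpha$ shows $\sum x_k^\alpha$ strictly increases — contradicting maximality. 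Hence at the maximum $R_n = q^n$, $x_n = (1-q)q^n$, giving $\sum_k x_k^\alpha = (1-q)^\alpha \sum_k q^{\alpha k} = \frac{(1-q)^\alpha}{1-q^\alpha}$. Substituting $q = \lambda/(\lambda+1)$ yields $1-q = 1/(\lambda+1)$ and $1-q^\alpha = \frac{(\lambda+1)^\alpha - \lambda^\alpha}{(\lambda+1)^\alpha}$, so $\sum_k x_k^\alpha = \frac{1}{(\lambda+1)^\alpha - \lambda^\alpha}$, which is exactly the claimed bound with $S=1$; rescaling by $S$ restores the general statement.

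An alternative, cleaner route that avoids the compactness/exchange argument is an inductive telescoping estimate directly on partial sums. Using $x_n = R_n - R_{n+1}$ and $R_{n+1}\le q R_n$, I would try to prove by induction on $N$ that $\sum_{k=0}^N x_k^\alpha \le C\, R_0^\alpha - C' R_{N+1}^\alpha$ for suitable constants $C = \frac{1}{(\lambda+1)^\alpha-\lambda^\alpha}$ and $C'>0$ depending only on $\lambda,\alpha$, and then let $N\to\infty$ using $R_{N+1}\to 0$. The inductive step reduces to a one-variable inequality: for $0 \le r \le q R$, one needs $(R-r)^\alpha + (C R^\alpha - C r'^\alpha \text{-type terms})$ to behave correctly; the key pointwise fact is the elementary inequality $(a-b)^\alpha \le \frac{1}{(\lambda+1)^\alpha - \lambda^\alpha}\bigl(a^\alpha - \mu\, b^\alpha\bigr)$-style bound valid when $b \le q a$, which follows from homogeneity by setting $a=1$ and checking a single-variable function on $[0,q]$ via its derivative.

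The main obstacle is making the extremal/exchange argument fully rigorous — in particular justifying that the supremum is attained and that the saturating perturbation is always admissible (it is, precisely because the feasible set is ``downward closed'' in the tails in the right sense). If that proves delicate, the inductive telescoping approach sidesteps it entirely at the cost of pinning down the correct auxiliary constant $C'$ and verifying the one-variable inequality; I expect the latter to be the only genuinely computational part, and it is the place where the precise value $q = \lambda/(\lambda+1)$ and the form $(\lambda+1)^\alpha - \lambda^\alpha$ of the constant emerge naturally. In either approach the equality case is read off immediately from the geometric sequence $x_k = (\lambda/(\lambda+1))^k$, for which every hypothesis constraint holds with equality.
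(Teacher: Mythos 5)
The paper itself offers no proof of this lemma: it is quoted verbatim from the cited reference [AFT] (Allouche--Mend\'es France--Tenenbaum), so there is nothing internal to compare against, and your proposal must be judged on its own. Your second route (telescoping on the tails $R_n$) is correct and is in fact essentially the original argument. The only thing you left unverified is the one-variable inequality, and it does hold, with no auxiliary constant needed: with $q=\lambda/(\lambda+1)$ and $C=1/((\lambda+1)^\alpha-\lambda^\alpha)$ one needs
\[
(1-r)^\alpha \le C\,(1-r^\alpha)\qquad\text{for } 0\le r\le q .
\]
Indeed $g(r)=C(1-r^\alpha)-(1-r)^\alpha$ satisfies $g'(r)=\alpha\bigl[(1-r)^{\alpha-1}-Cr^{\alpha-1}\bigr]$, whose sign is that of $(r/(1-r))^{1-\alpha}-C$; this is increasing in $r$ and its unique zero $r^*$ obeys $r^*/(1-r^*)=C^{1/(1-\alpha)}\ge\lambda=q/(1-q)$ (the inequality $C\ge\lambda^{1-\alpha}$ reduces to $(\lambda+1)^{1-\alpha}\ge\lambda^{1-\alpha}$), so $g$ is non-increasing on $[0,q]$ and $g(q)=0$ gives $g\ge0$ there. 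Applying this with $r=R_{n+1}/R_n$ and homogeneity yields $x_n^\alpha=(R_n-R_{n+1})^\alpha\le C(R_n^\alpha-R_{n+1}^\alpha)$, the sum telescopes exactly (so $C'=C$), and $R_{N+1}\to0$ finishes; the equality case is read off because the geometric sequence saturates both the constraint and the pointwise inequality ($r=q$) at every index. By contrast, your first (extremal/exchange) route has a real gap beyond what you acknowledge: the compactness part is fine (tails live in $\prod_n[0,q^nR_0]$ and the functional is continuous there), but the perturbation step requires $\sum_{m>n}x_m^\alpha\ge R_{n+1}x_n^{\alpha-1}$ to make the first variation positive, and this is not automatic without already knowing the structure of the tail beyond $n$. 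I would drop that route entirely and present the telescoping argument, which is complete.
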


\subsection{Radon-Nikodym Property}
A Banach space is said to have the {\it Radon-Nikod\'ym property (RNP)}, if any rectifiable and absolutely continuous path starting from the origin can be recovered by Bochner integrating its derivative.

For basic information about these concepts we refer to \cite{Diestel} and \cite{DiestelUhl77}, 
see also \cite{FA_book}. 


\section{Main Results}

The first of our main results is the following improvement of \cite[Theorem 3.1]{RT}.

\begin{theorem}
Let $X$ be a uniformly convex Banach space whose modulus of convexity has power type. Let $\nu$ be the modulus of continuity of the weight function $w$. We assume that the function $\nu$ satisfies the condition
\begin{equation}\label{eq: dini_condition}
\limsup_{s\to0+}\frac{\int_0^s \frac{\nu(t)}{t}dt}{\nu(s)}<\infty.
\end{equation}
Then every $d_w$-geodesic $\gamma$ is $C^1$ excluding the endpoints.
\end{theorem}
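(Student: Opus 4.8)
The plan is to run a bootstrap argument on the geodesic equation. Fix a $d_w$-geodesic $\gamma$ parametrized by quasihyperbolic arc length, so $\|\gamma'(t)\| = w(\gamma(t))$ (or the appropriate $d_w$-weight) at a.e.\ $t$; since $X$ is uniformly convex it has the RNP, and reflexive spaces carry enough structure that $\gamma$ is differentiable a.e.\ and recovered by integrating $\gamma'$. First I would show that $\gamma$ is $C^1$ is equivalent to showing that the normalized derivative $F(t) := \gamma'(t)/\|\gamma'(t)\|$ extends to a (norm-)continuous unit-vector-valued function on the open parameter interval. The strict convexity discussion in the preliminaries already pins down $F$ uniquely at points where a certain averaging equality holds; the real content is quantitative continuity of $F$.

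The key mechanism is a two-sided comparison: for parameters $s<t$ close together, the geodesic property forces the chord of $\gamma$ between $\gamma(s)$ and $\gamma(t)$ to be $d_w$-almost-straight, which via the power-type modulus of convexity forces $\gamma'(s)$ and $\gamma'(t)$ (suitably normalized) to be close. Concretely, I would:
(i) use that $d_w$ is, up to the weight, comparable to the quasihyperbolic metric and that on a short subarc the weight varies by at most $\nu(\operatorname{diam})$, to get that $\left\|\frac{\gamma'(s)}{\|\gamma'(s)\|} + \frac{\gamma'(t)}{\|\gamma'(t)\|}\right\|$ is within $O(\nu(|t-s|))$-type error of $2$ — this is exactly the inequality chain displayed before Lemma~\ref{lm: LUR}, now made quantitative;
(ii) invoke uniform convexity of power type $p$: from $\|u\| = \|v\| = 1$ and $\|u+v\| \ge 2 - \eta$ one gets $\|u - v\| \le C\eta^{1/p}$, so $\|F(s) - F(t)\| \lesssim \nu(|t-s|)^{1/p}$ at first pass;
(iii) feed this back. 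A single application only gives a modulus of continuity for $F$ of the form $\nu(\cdot)^{1/p}$, which need not tend to $0$ nicely if $\nu$ is bad; but the Dini-type condition \eqref{eq: dini_condition} is precisely what makes the iteration converge. The averaged error picked up along a subarc of length $h$ is controlled not by $\nu(h)$ but by an integral $\int_0^h \frac{\nu(t)}{t}\,dt$ of the oscillation of $\gamma'$ over dyadic subscales, and \eqref{eq: dini_condition} says this integral is $O(\nu(h))$. Combining with Lemma~\ref{lem:inequal_series} to sum the geometric-type series of scale-by-scale contributions (with exponent $\alpha = 1/p$), one obtains a genuine modulus of continuity for $F$ that tends to $0$, hence $\gamma \in C^1$ away from the endpoints.

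The main obstacle is step (iii): turning the single-scale estimate $\|F(s)-F(t)\| \lesssim \nu(|t-s|)^{1/p}$ into a self-improving one. The point is that the quantity controlling the deviation of a subarc from a straight $d_w$-segment is not the raw modulus $\nu$ but a weighted sum $\sum_k 2^{-k}\,\nu(2^{-k}h)/(2^{-k}h) \cdot 2^{-k}h \sim \int_0^h \nu(t)/t\,dt$ of the errors committed at each dyadic subscale of $[s,t]$, because the excess quasihyperbolic length of the chord over its $d_w$-straight competitor accumulates additively across scales. Getting this bookkeeping exactly right — in particular verifying that the series of scale contributions satisfies the hypothesis $\lambda x_n \ge \sum_{k>n} x_k$ of Lemma~\ref{lem:inequal_series}, with $\lambda$ depending only on the $\limsup$ in \eqref{eq: dini_condition} — is the crux, and is exactly the improvement over \cite[Theorem 3.1]{RT}, where a cruder Hölder-type bound was used. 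The endpoints are excluded because there $d(\gamma(t)) \to 0$ and the weight comparison degenerates, so no uniform control on $F$ is available up to the boundary.
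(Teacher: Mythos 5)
Your plan follows the paper's proof in essence: the only new ingredient beyond \cite{RT} is the one you isolate as the crux, namely that \eqref{eq: dini_condition} gives $\sum_{j>k}\nu(2^{-j})\le C\nu(2^{-k})$ over dyadic tails, so Lemma \ref{lem:inequal_series} with $\alpha=1/p$ yields $\sum_j\nu(h/2^j)^{1/p}<\infty$ (summability of the function $\beta$ from \cite{RT}), after which your steps (i)--(iii) are exactly the geometric bootstrap of \cite{RT}. One cosmetic slip that does not affect the argument: the endpoints are not excluded because $d(\gamma(t))\to 0$ there --- they are interior points of $\Omega$ --- but only because the two-sided comparison of derivatives is unavailable at the ends of the parameter interval.
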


Above in \eqref{eq: dini_condition} we have a strengthening of a Dini type condition. 

\begin{proof}
It is easy to see from the condition \eqref{eq: dini_condition} that there exists a constant $C$ and a positive integer $k_0>0$ such that for all $k\geq k_0$, $k\in\mathbb{N}$,
$$\int_0^{2^{-k}} \frac{\nu(t)}{t}dt<C\nu(2^{-k}).$$
Since
$$\sum_{j=k+1}^\infty\nu(2^{-j})\leq 2 \int_0^{2^{-j}}\frac{\nu(t)}{t}dt,$$
we have that for all $k\geq k_0$
$$\sum_{j=k+1}^\infty\nu(2^{-j})\leq C\nu(2^{-k}).$$
Then by Lemma \ref{lem:inequal_series}, we get
$$\sum_{j=k_0}^\infty \nu(2^{-j})^\alpha\leq \frac{1}{(C+1)^\alpha-C^\alpha}\left(\sum_{j=k_0}^\infty \nu(2^{-j})\right)^\alpha<\infty,$$
which implies that
\begin{equation}\label{eq: nu_convergence}
\sum_{j=1}^\infty \nu(2^{-j})^\alpha<\infty.
\end{equation}

Let $$\beta(h)=\frac{1}{c}\left(\frac{2}{\omega_0}\right)^{\frac{1}{p}}\nu(3h)^{\frac{1}{p}},\quad p\geq2$$
be the same function as in Rasila-Talponen \cite{RT}. Using \eqref{eq: nu_convergence} with $\alpha=\frac{1}{p}$, we see that
$$\sum_{j=1}^\infty \beta\left(\frac{h}{2^j}\right)<\infty.$$
Now the theorem follows from the argument of Rasila-Talponen \cite{RT}.
\end{proof}

It is known that some convexity properties (e.g. uniform convexity and the RNP with strict convexity) of
the underlying Banach space are transferred to the quasihyperbolic geometry in the case of a convex domain.
It was asked in \cite{KRT} whether for a locally uniformly rotund (LUR) Banach space the quasihyperbolic-metric
in a symmetric convex domain in fact induces via a Minkowski functional a norm which is LUR.
This question is next settled affirmatively in the reflexive case.
\begin{theorem}
Let $\X$ be a LUR reflexive Banach space and let $\Omega \subset \X$ be a convex domain.
Let $x_0  , y, y_n \in \Omega$, $n\in\N$, such that
\[k(x_0 , y )= k(x_0 , y_n ),\quad n\in\N, \]
\[k(x_0 , \frac{y+y_n }{2})\to k(x_0 , y),\ n\to\infty .\]
Then $y_n \to y$ in norm as $n\to\infty$. Moreover, if $\Omega$ is symmetric then
the Minkowski functional of the ball $\B_k (0 , r)$ is an equivalent LUR norm on $\X$ for any $r>0$.
\end{theorem}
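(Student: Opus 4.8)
The plan is to run a compactness-plus-uniqueness argument, leveraging reflexivity to extract weak limits and LUR-ness of $\X$ to upgrade them to norm convergence. First I would fix a $d_w$-geodesic (here the quasihyperbolic geodesic) from $x_0$ to $y$; since $\X$ is reflexive it has the RNP, so the geodesic is absolutely continuous and recoverable by Bochner-integrating its derivative, and the same holds for each geodesic $\gamma_n$ from $x_0$ to $y_n$. Parametrize all of them on $[0,1]$ with constant quasihyperbolic speed equal to the common value $R:=k(x_0,y)=k(x_0,y_n)$. The midpoint hypothesis $k(x_0,\tfrac{y+y_n}{2})\to R$ together with the inequality recorded in the preamble — that the quasihyperbolic length of the pointwise average $\tfrac{\gamma+\gamma_n}{2}$ is at most the average of the two lengths, hence $\le R$ — forces the average path to be \emph{asymptotically} a geodesic: $\ell_k\big(\tfrac{\gamma+\gamma_n}{2}\big)\to R$.

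Next I would localize this to the derivatives. Writing out the chain of inequalities in the preamble (concavity of $d$, then the triangle inequality in $\X$), equality throughout is exactly the situation of \eqref{eq: unit}–\eqref{eq: presentation}; the gap between the two sides integrates to $\ell_k\big(\tfrac{\gamma+\gamma_n}{2}\big)$ minus $R\to 0$, so for a.e.\ $t$ we get
\[
\Big\|\tfrac{\gamma'(t)}{d(\gamma(t))}+\tfrac{\gamma_n'(t)}{d(\gamma_n(t))}\Big\| - 2 \longrightarrow 0
\]
in an $L^1$-averaged sense, and after passing to a subsequence, pointwise a.e. Since $\gamma'(t)/d(\gamma(t))$ and $\gamma_n'(t)/d(\gamma_n(t))$ are unit vectors, Lemma~\ref{lm: LUR} (applied with $x=\gamma'(t)/d(\gamma(t))$, $y_n = \gamma_n'(t)/d(\gamma_n(t))$, which is bounded) gives that $\gamma_n'(t)/d(\gamma_n(t))$ converges to a positive multiple of $\gamma'(t)/d(\gamma(t))$; combined with both paths having unit quasihyperbolic speed, the multiple is $1$, so $\gamma_n'(t)-\tfrac{d(\gamma_n(t))}{d(\gamma(t))}\gamma'(t)\to 0$ a.e. Because $d$ is $1$-Lipschitz on the convex domain and the paths stay in a fixed bounded set with $d$ bounded below, a dominated-convergence / Vitali argument then yields $\gamma_n'\to\gamma'$ in $L^1([0,1];\X)$, whence by the RNP $\gamma_n(1)\to\gamma(1)$, i.e. $y_n\to y$ in norm.

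For the final clause, suppose $\Omega$ is symmetric, so $0\in\Omega$, and let $N_r$ be the Minkowski functional of $\B_k(0,r)$. Strict convexity of $\X$ (a consequence of LUR) already makes $\B_k(0,r)$ strictly convex and bounded with $0$ in its interior by the preamble remarks, so $N_r$ is an equivalent norm. To check it is LUR, take $z, z_n$ with $N_r(z)=N_r(z_n)=1$ and $N_r\big(\tfrac{z+z_n}{2}\big)\to 1$. Rescaling, $z,z_n\in\partial\B_k(0,r)$ means $k(0,z)=k(0,z_n)=r$, and $N_r(\tfrac{z+z_n}{2})\to 1$ translates, using that $\B_k(0,r)$ is convex and that $k(0,\cdot)$ restricted to a ray is continuous and increasing, into $k\big(0,\tfrac{z+z_n}{2}\big)\to r$. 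This is precisely the hypothesis of the first part with $x_0=0$, $y=z$, $y_n=z_n$, so $z_n\to z$ in norm, and since $N_r$ is equivalent to $\|\cdot\|$ this gives $N_r(z_n-z)\to 0$, i.e. $N_r$ is LUR.

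The main obstacle I anticipate is the passage from the \emph{integrated} near-equality $\ell_k\big(\tfrac{\gamma+\gamma_n}{2}\big)\to R$ to a genuine $L^1$ (or a.e., along a subsequence) statement about the derivatives that is uniform enough to feed into Lemma~\ref{lm: LUR}: one must control the integrand $2 - \big\|\tfrac{\gamma'}{d(\gamma)}+\tfrac{\gamma_n'}{d(\gamma_n)}\big\|$ together with the slack coming from concavity of $d$ (the step $d(\tfrac{\gamma+\gamma_n}{2})\ge \tfrac{d(\gamma)+d(\gamma_n)}{2}$), show both are nonnegative and their combined integral tends to $0$, and then invoke a diagonal/subsequence extraction — all while ensuring the geodesics $\gamma_n$ do not escape to the boundary (which uses that $R$ is fixed, hence the $\gamma_n$ lie in a fixed quasihyperbolic ball, which is bounded with $d$ bounded below by the convexity+strict-convexity discussion). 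Handling the reparametrization bookkeeping so that "unit quasihyperbolic speed" is literally available a.e. is where the RNP hypothesis is essential.
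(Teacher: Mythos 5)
Your overall strategy is the same as the paper's: put the two geodesics at unit quasihyperbolic speed (reflexivity giving RNP, hence a.e.\ differentiability and recovery by Bochner integration), use convexity of the quasihyperbolic ball plus the midpoint hypothesis to force the averaged path to be asymptotically length-minimizing, pass to a subsequence to get a.e.\ convergence $\bigl\|\tfrac{\gamma'}{d(\gamma)}+\tfrac{\gamma_n'}{d(\gamma_n)}\bigr\|\to 2$, apply the LUR property (your use of Lemma~\ref{lm: LUR} is a harmless detour; since both vectors are unit vectors the LUR definition applies directly), and integrate back; the reduction of the Minkowski-functional claim to the first part is also the paper's argument.

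There is, however, one concrete gap in your final bootstrapping step. From the LUR property you obtain $\tfrac{\gamma_n'(t)}{d(\gamma_n(t))}\to\tfrac{\gamma'(t)}{d(\gamma(t))}$ a.e., i.e.\ $\gamma_n'(t)-\tfrac{d(\gamma_n(t))}{d(\gamma(t))}\gamma'(t)\to 0$. To conclude $\gamma_n'\to\gamma'$ you need $d(\gamma_n(t))\to d(\gamma(t))$, and since $d$ is only controlled through $|d(\gamma_n(t))-d(\gamma(t))|\le\|\gamma_n(t)-\gamma(t)\|$, this amounts to the pointwise convergence $\gamma_n(t)\to\gamma(t)$ --- which at $t=1$ is exactly the statement being proved. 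As written, ``dominated convergence / Vitali'' does not break this circle. The paper breaks it by invoking the uniform convergence of the geodesic paths $\gamma_n\to\gamma$ from \cite[Lemma 3.3]{MartioVaisala} before integrating; alternatively, you can avoid the citation by a Gronwall-type bootstrap: writing $u_n=\gamma_n'/d(\gamma_n)$, $u=\gamma'/d(\gamma)$ (which do converge in $L^1$ by dominated convergence, being uniformly bounded since $d$ is bounded above on the quasihyperbolic ball), one gets
\begin{equation*}
\|\gamma_n(t)-\gamma(t)\|\le M\int_0^t\|u_n-u\|\,ds+\int_0^t\|\gamma_n(s)-\gamma(s)\|\,ds,
\end{equation*}
and Gronwall's inequality then yields $\gamma_n\to\gamma$ uniformly, hence $y_n\to y$. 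Either repair should be made explicit; the rest of your argument is sound.
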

\begin{proof}
The latter part of the statement follows from the first one by first observing that $\B_k (0 , r)\subset \Omega$ is a symmetric convex bounded subset including the origin as an interior point. Therefore the norm $|||\cdot |||$ induced by the Minkowski functional is equivalent to the given norm of the Banach space $\X$. Thus, in $\B_k (0 , r)$ the norm topology and the topology induced by the quasihyperbolic metric coincide, and in particular there is no need to distinguish between different modes of sequential convergence (norm vs. quasihyperbolic).
By using the fact that the function $x\mapsto d(x, \partial \Omega)$ is bounded on $\B_k (0 , r)$ from above and from below by a positive constant, we obtain that $|||y_n ||| \to 1$ if and only if $k(0, y_n )\to 1$.

Let $x_0 ,y\in \Omega$ and $\ell = k(x_0 , y)$. Fix $y_n \in \Omega$, $n\in\N$, such that
$k(x_0 , y_n)=\ell$ for all $n\in\N$ and $k(x_0 , \frac{y+y_n }{2})\to \ell$, $n\to\infty$.

Note that $\X$ is strictly convex, being LUR.
Since $\X$ is a strictly convex reflexive Banach space and $\Omega$ is its convex domain, there is,
up to a reparametrization, a unique quasihyperbolic geodesic $\lambda$ between $x_0$ and $y$, see \cite{RasilaTalponen12, RT}. Let us investigate the unique quasihyperbolic unit speed quasihyperbolic geodesics
$\lambda, \lambda_n \colon [0, \ell ] \to \Omega$ between $x_0 , y$ and $x_0 , y_n$, respectively.
We will follow closely the arguments in \cite{MartioVaisala, RasilaTalponen12, RT};
see also the Preliminaries section for the Radon-Nikodym Property (RNP) and \cite{DiestelUhl77}.

Recall that according to the RNP of $\X$ the paths $\lambda, \lambda_n$ are differentiable a.e. and can be recovered by integrating the derivatives in the Bochner sense:
\[\lambda_n (t)=\lambda_n (0)+\int_{0}^t \lambda_{n}' (s)\ ds,\quad t\in [0,\ell].\]
According to the parametrization of the paths we have that
\[ \left\|\frac{\lambda' (t)}{d(\lambda(t))} \right\| = \left\|\frac{\lambda_{n}' (t)}{d(\lambda_{n}(t))} \right\| = 1,\quad n\in\N,\ \text{for\ a.e.}\ t\in [0,\ell].\]

We wish to show that
\[\|y - y_{n}\|\ \to 0,\quad n\to \infty .\]
It suffices to show that for \emph{any} subsequence $(n_k)$ there exists a further subsequence
$(n_{k_j})$ such that the above convergence holds when passing to this subsequence
and letting $j\to\infty$ (because then clearly $\limsup_{n\to\infty}\|y-y_n \|=0$).

Since $B_k (x_0 , \ell)$ is convex we have that $k(x_0 , \frac{y+y_{n_k}}{2})\leq \ell$.
We observe that
\[1= \frac{1}{2}\left(\frac{\|\lambda' \|}{d(\lambda)}+\frac{\|\lambda_{n_k}' \|}{d(\lambda_{n_k})}\right)
\geq \frac{\frac{1}{2}(\|\lambda' \| + \|\lambda_{n_k}' \|)}{\frac{1}{2}(d(\lambda) +d(\lambda_{n_k}))}
\geq\frac{\frac{1}{2}\left\|\lambda' + \lambda_{n_k}' \right\|}{d\left(\frac{\lambda +\lambda_{n_k}}{2}\right)}\]
and
\[\ell=\int_{0}^\ell \frac{1}{2}\left(\frac{\|\lambda' \|}{d(\lambda)}+\frac{\|\lambda_{n_k}' \|}{d(\lambda_{n_k})}\right)\geq
\int_{0}^\ell \frac{\frac{1}{2}\left\|\lambda' + \lambda_{n_k}' \right\|}{d\left(\frac{\lambda +\lambda_{n_k}}{2}\right)}
\geq k\left(x_0 , \frac{y+y_{n_k}}{2}\right) \to \ell\]
as $k\to\infty$.

It follows that
\[\frac{\frac{1}{2}\left\|\lambda' + \lambda_{n_k}' \right\|}{d\left(\frac{\lambda +\lambda_{n_k}}{2}\right)} \to 1\]
in $L^1$ and in measure as $k\to\infty$. Thus there is a further subsequence $(n_{k_j})$ such that
\[\frac{\frac{1}{2}\left\|\lambda' (t) + \lambda_{n_{k_j}}' (t) \right\|}{d\left(\frac{\lambda(t) +\lambda_{n_{k_j}}(t)}{2}\right)} \to 1\]
for a.e. $t\in [0,\ell]$ as $j\to\infty$.

Using an adaptation of considerations after \eqref{eq:  presentation}
we get
\[\left\|\frac{\lambda'}{d(\lambda)} +  \frac{\lambda_{n_{k_j}}'}{d(\lambda_{n_{k_j}})}\right\|\to 2\]
for a.e. $t$ as $j\to\infty$. It follows from the LUR assumption that
\[\left\|\frac{\lambda'}{d(\lambda)} -  \frac{\lambda_{n_{k_j}}'}{d(\lambda_{n_{k_j}})}\right\|\to 0\]
for a.e. $t$ as $j\to\infty$. Lebesgue's dominated convergence theorem then yields
\begin{equation}\label{eq: LebgDCT}
\int_{0}^\ell \left\|\frac{\lambda'}{d(\lambda)} -  \frac{\lambda_{n_{k_j}}'}{d(\lambda_{n_{k_j}})}\right\|\ dm\to 0,\quad j\to\infty .
\end{equation}
By \cite[Lemma 3.3]{MartioVaisala}, the path $\lambda_{n_{k_{j}}}$ converges uniformly to the geodesic path $\lambda$ and hence,
\begin{equation}\label{eq: pathUC}
\left\|\frac{\lambda_{n_{k_{j}}}'(t)}{d(\lambda_{n_{k_{j}}}'(t))}-\frac{\lambda_{n_{k_{j}}}'(t)}{d(\lambda'(t))}\right\|\to 0, \quad j\to\infty,
\end{equation}
uniformly. Since
\[y_{n_{k_j}}=x_0 + \int_{0}^\ell \lambda_{n_{k_j}}' (t)\ dt\]
and
\[y=x_0 +  \int_{0}^\ell \lambda' (t)\ dt \]
by the Radon-Nikodym Property, we have
\begin{eqnarray*}
\|y_{n_{k_j}}-y\|&=&\left\|\int_0^\ell(\lambda_{n_{k_{j}}}'(t)-\lambda'(t))dt\right\|\\
                 &\leq&\int_0^\ell\left\|\lambda_{n_{k_{j}}}'(t)-\lambda'(t)\right\|dt\\
                 &\leq&\max\limits_{0\leq t\leq \ell}\{d(\lambda(t))\}\int_0^\ell\left\|\frac{\lambda_{n_{k_{j}}}'(t)}{d(\lambda(t))}-\frac{\lambda'(t)}{d(\lambda(t))}\right\|dt\\
                 &\leq&\max\limits_{0\leq t\leq \ell}\{d(\lambda(t))\}\left(\int_0^\ell\left\|\frac{\lambda_{n_{k_{j}}}'(t)}{d(\lambda_{n_{k_{j}}}(t))}-\frac{\lambda'(t)}{d(\lambda(t))}\right\|dt\right.\\
                 & &\left. +\int_0^\ell\left\|\frac{\lambda_{n_{k_{j}}}'(t)}{d(\lambda_{n_{k_{j}}}(t))}-\frac{\lambda_{n_{k_{j}}}'(t)}{d(\lambda(t))}\right \|dt \right)\\
                 &\to&0
\end{eqnarray*}
which follows from \eqref{eq: LebgDCT} and \eqref{eq: pathUC}.

Thus $y_{n} \to y$ in norm as $k\to \infty$.
\end{proof}

The following result has some bearing on some previous results on the existence and smoothness of geodesics
and the smoothness of the quasihyperbolic metric in convex domains, see \cite{RasilaTalponen12, RT}.

\begin{theorem}\label{thm: unique_smooth}
Let $\X$ be a Banach space. Let $\Omega \subset \X$ be a domain and $\gamma \colon [0,\ell]\to \Omega$ be a quasihyperbolic unit speed quasihyperbolic geodesic between points $x_0 , x \in \Omega$ (and of quasihyperbolic length $\ell$). Suppose that the Gateaux derivative $(k (x_0 , \cdot))' (x)$ exists and the vector derivative
$\gamma' (\ell )=\lim_{t\to \ell^- } \frac{\gamma(\ell)-\gamma(t)}{\ell- t}$ exists. Then
\[\sup_{z \in \S_\X}  (k (x_0 , \cdot))' (x) \left[\frac{z}{d(x)}\right] =  (k (x_0 , \cdot))' (x) [\gamma' (\ell )]\]
where $\|\gamma'(\ell)\|=d(x)$.
\end{theorem}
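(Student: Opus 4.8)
The plan is to interpret the statement as a first-order optimality condition: the directional derivative of the distance function $k(x_0,\cdot)$ at $x$, evaluated in the rescaled directions $z/d(x)$ with $z\in\S_\X$, attains its maximum precisely along the terminal velocity $\gamma'(\ell)$ of the geodesic, and that velocity has norm $d(x)$. The starting observation is that $\gamma$ has unit quasihyperbolic speed, so for a.e.\ $t$ we have $\|\gamma'(t)\|=d(\gamma(t))$; assuming the terminal vector derivative $\gamma'(\ell)$ exists, a limiting/continuity argument (using that $d$ is $1$-Lipschitz, hence $t\mapsto d(\gamma(t))$ is continuous) gives $\|\gamma'(\ell)\|=d(x)$, which is the side condition. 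This is the easy half.

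For the main equality, I would argue in two directions. For the inequality ``$\ge$'' is immediate: since $\gamma'(\ell)/d(x)\in\S_\X$ (by the norm computation just made), the vector $z=\gamma'(\ell)/d(x)$ is an admissible competitor in the supremum on the left, giving $\sup_{z\in\S_\X}(k(x_0,\cdot))'(x)[z/d(x)]\ge (k(x_0,\cdot))'(x)[\gamma'(\ell)]$. The reverse inequality ``$\le$'' is where the geodesic property must be used. Fix $z\in\S_\X$ and consider the perturbed point $x_s=x+s\,z$ for small $s>0$ (legitimate since $\Omega$ is open). On one hand, by definition of the Gateaux derivative,
\[
k(x_0,x_s)=k(x_0,x)+s\,(k(x_0,\cdot))'(x)\!\left[z\right]+o(s).
\]
On the other hand, I want an \emph{upper} bound on $k(x_0,x_s)$ obtained by concatenating the geodesic $\gamma$ from $x_0$ to $x$ with a short segment, or better, by reparametrizing a slightly extended geodesic. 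The cleanest route: compare with the path that follows $\gamma$ and then runs straight from $x=\gamma(\ell)$ to $x_s$. The quasihyperbolic length of that straight segment is at most $s/(d(x)+o(s))=\frac{s}{d(x)}+o(s)$, but this only yields the crude bound $(k(x_0,\cdot))'(x)[z]\le 1/d(x)$ for all $z$, which is too weak. Instead I would use the finer, direction-sensitive estimate: reparametrize $\gamma$ on $[0,\ell+s']$ so that near the endpoint it moves in a direction making a small angle with $z$; equivalently, use the known first-variation formula for quasihyperbolic length. The key identity I expect to need is that the right derivative of $t\mapsto k(x_0,\gamma(t))$ along \emph{any} extension equals the pairing of the (norming functional of) the geodesic's terminal direction against the perturbation divided by $d(x)$ — concretely, that $(k(x_0,\cdot))'(x)[v]=\frac{1}{d(x)}\big\langle f, v\big\rangle$ where $f\in\S_{\X^*}$ is a support functional at $\gamma'(\ell)/d(x)$ (i.e.\ $f[\gamma'(\ell)/d(x)]=1$). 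Granting this representation, for any $z\in\S_\X$,
\[
(k(x_0,\cdot))'(x)\!\left[\frac{z}{d(x)}\right]=\frac{1}{d(x)^2}\langle f,z\rangle\le \frac{\|f\|\,\|z\|}{d(x)^2}\cdot d(x)=\frac{1}{d(x)}=(k(x_0,\cdot))'(x)\!\left[\frac{\gamma'(\ell)}{d(x)}\right],
\]
which is exactly the claimed supremum identity.

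Thus the heart of the argument is establishing the representation $(k(x_0,\cdot))'(x)[v]=\frac{1}{d(x)}\langle f,v\rangle$ with $f$ a norming functional for $\gamma'(\ell)/d(x)$. I would derive this by a two-sided variational estimate at the endpoint: the lower bound on $k(x_0,x+sv)$ comes from the fact that \emph{any} path from $x_0$ to $x+sv$ must, near its end, traverse displacement close to $x+sv-x=sv$ through a region where $d\le d(x)+O(s)$, and combining this with the minimality of $\ell=k(x_0,x)$ via the triangle inequality $k(x_0,x)\le k(x_0,x+sv)+k(x+sv,x)$; the upper bound comes from concatenation as above but \emph{with the right endpoint direction}, exploiting that the geodesic leaves $x$ (in reverse time) along $-\gamma'(\ell)$, so that steering toward $x+sv$ costs, to first order, only $\frac{s}{d(x)}\langle f,v\rangle$ rather than $\frac{s}{d(x)}\|v\|$. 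I expect the main obstacle to be making this endpoint first-variation rigorous without extra regularity on $\partial\Omega$ or on $\gamma$ near $\ell$ — in particular, justifying that the relevant norming functional $f$ for the terminal velocity is the \emph{same} functional that represents the Gateaux derivative of $k(x_0,\cdot)$ at $x$, which is essentially a statement that the geodesic is ``orthogonal'' (in the James sense) to the level sets of $k(x_0,\cdot)$. Once that matching of functionals is in place, the supremum identity and the norm equality $\|\gamma'(\ell)\|=d(x)$ follow as above.
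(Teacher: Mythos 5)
There is a genuine gap, and it comes precisely from dismissing the right tool. Your ``crude'' concatenation bound $(k(x_0,\cdot))'(x)[z]\le 1/d(x)$ for all $z\in\S_\X$, i.e.\ $\|(k(x_0,\cdot))'(x)\|_{\X^*}\le 1/d(x)$, is not too weak --- it is exactly one half of the paper's short argument. The other half, which is the idea missing from your proposal, is that a unit-speed quasihyperbolic geodesic satisfies $k(x_0,\gamma(t))=t$ for \emph{every} $t\in[0,\ell]$, so the chain rule at $t=\ell$ gives the exact value
\[
(k(x_0,\cdot))'(x)[\gamma'(\ell)]=\frac{d}{dt}\,k(x_0,\gamma(t))\Big|_{t=\ell}=1 .
\]
Since $\|\gamma'(\ell)\|=d(x)$, this says the Gateaux derivative already achieves the value $1/d(x)$ at the unit vector $\gamma'(\ell)/d(x)$; combined with the upper bound $\|(k(x_0,\cdot))'(x)\|_{\X^*}\le 1/d(x)$ this forces all the inequalities in the chain to be equalities, i.e.\ the derivative attains its norm at $\gamma'(\ell)/d(x)$, which is the content of the theorem. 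No first-variation formula, no endpoint steering construction, and no matching of norming functionals is needed.

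Beyond being unnecessary, your proposed route is circular and unfinished: the representation $(k(x_0,\cdot))'(x)[v]=\frac{1}{d(x)}\langle f,v\rangle$ with $f$ a norming functional of $\gamma'(\ell)/d(x)$ is equivalent to the conclusion (it says precisely that the derivative has norm $1/d(x)$ and attains it at $\gamma'(\ell)/d(x)$), so ``granting this representation'' begs the question --- and you yourself flag that you cannot make the endpoint first variation rigorous. There is also an algebra slip in your final display: $\frac{1}{d(x)^2}\langle f,z\rangle\le\frac{1}{d(x)^2}$, not $\frac{1}{d(x)}$; this reflects a normalization mismatch (present to some extent in the theorem's own formulation) between $\sup_{z\in\S_\X} (k(x_0,\cdot))'(x)[z/d(x)]=\|(k(x_0,\cdot))'(x)\|_{\X^*}/d(x)$ and $(k(x_0,\cdot))'(x)[\gamma'(\ell)]$. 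The honest content, and what the paper actually proves, is the equality $\|(k(x_0,\cdot))'(x)\|_{\X^*}=(k(x_0,\cdot))'(x)[\gamma'(\ell)/d(x)]=1/d(x)$. Your treatment of the side condition $\|\gamma'(\ell)\|=d(x)$ and of the trivial ``$\ge$'' inequality is fine.
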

\begin{proof}
The latter part of the claim is clear from the continuity of the distance function.
Therefore the claim reduces to  checking that
\[\|(k (x_0 , \cdot))' (x)\|_{\X^*} =  (k (x_0 , \cdot))' (x) \left[\frac{\gamma' (\ell )}{d(x)}\right]\]
where $\left\|\frac{\gamma' (\ell )}{d(x)} \right\|=1$.

Clearly
\begin{equation}\label{eq: dx}
\frac{1}{d(x)}\geq \|(k (x_0 , \cdot))' (x) \|_{\X^*} \geq  (k (x_0 , \cdot))' (x)  \left[\frac{\gamma' (\ell )}{d(x)}\right]> 0.
\end{equation}
Note that according to the definition of the geodesic $k (x_0 , \gamma(t))=t$ for each $t\in [0,\ell]$
and therefore the chain rule gives us that
\[ \frac{d}{dt} k (x_0 , \gamma(t))\Big|_{t=\ell} = (k(x_0 , \cdot ))' (x) [\gamma' (\ell)]=1.\]
This means that the first two inequalities in \eqref{eq: dx} hold as equalities. This proves the claim.
\end{proof}

\begin{corollary}
Let $\X$ be a Gateaux smooth Banach space. Let $\Omega \subset \X$ be a domain and $\gamma \colon [0,\ell]\to \Omega$ be a quasihyperbolic unit speed quasihyperbolic geodesic between points $x_0 , x \in \Omega$ (and of quasihyperbolic length $\ell$). Assume that $y=\gamma (t)$ for some $0<t<1$. Suppose $k (x_0 , \cdot)$ and $k(y,\cdot)$ are Gateaux differentiable at $x$ and $\gamma$ has left derivative at $\ell$. Then $k (x_0 , \cdot)' (x)$ and $k (y , \cdot)' (x)$ coincide.

In particular, the geometric interpretation of this fact is that  the unique tangent spaces of the spheres $\S_k (x_0, \ell)$ and $\S_k (y, \ell-t)$ at $x$ coincide.
\end{corollary}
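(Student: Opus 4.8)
The plan is to derive this corollary directly from Theorem~\ref{thm: unique_smooth} applied twice, once with base point $x_0$ and once with base point $y$. First I would observe that the restriction $\gamma|_{[t,\ell]}$ is itself a quasihyperbolic unit speed geodesic from $y=\gamma(t)$ to $x$, of length $\ell-t$; this is immediate because a subpath of a geodesic is a geodesic (any shorter connecting path would, concatenated with $\gamma|_{[0,t]}$, contradict minimality of $\gamma$). Its left derivative at the endpoint is the same vector $\gamma'(\ell)$ that appears for the full geodesic, since the two paths coincide near $\ell$. So the hypotheses of Theorem~\ref{thm: unique_smooth} are met for both the pair $(x_0,x)$ with geodesic $\gamma$ and the pair $(y,x)$ with geodesic $\gamma|_{[t,\ell]}$.

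Next I would invoke the theorem in each case. It yields that the Gateaux derivative $f := (k(x_0,\cdot))'(x)$ attains its operator norm, namely $1/d(x)$, at the unit vector $u := \gamma'(\ell)/d(x)$, and likewise $g := (k(y,\cdot))'(x)$ attains its norm $1/d(x)$ at the same unit vector $u$. Thus $f(u) = g(u) = 1/d(x) = \|f\|_{\X^*} = \|g\|_{\X^*}$. Now the key point is Gateaux smoothness of $\X$: at a nonzero point the norm of $\X^*$ has a unique supporting functional in the weak-$*$ sense, equivalently the norm of $\X^*$ is Gateaux smooth — but what I actually need is that a norming functional for a given norm-one vector is unique up to scaling. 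Since $u$ is a norm-one vector and both $d(x)f$ and $d(x)g$ are norm-one functionals in $\X^*$ with $(d(x)f)(u) = (d(x)g)(u) = 1$, Gateaux smoothness of $\X$ (uniqueness of the norm-attaining functional at $u$) forces $d(x)f = d(x)g$, hence $f = g$, i.e. $(k(x_0,\cdot))'(x) = (k(y,\cdot))'(x)$.

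The geometric conclusion then follows formally: the tangent hyperplane to the sphere $\S_k(x_0,\ell)$ at $x$ is the kernel of $(k(x_0,\cdot))'(x)$ translated to $x$, and similarly for $\S_k(y,\ell-t)$ with $(k(y,\cdot))'(x)$; equal derivatives give equal kernels and hence the same tangent space. Uniqueness of this tangent space is exactly the Gateaux differentiability hypothesis.

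The main obstacle I anticipate is the smoothness bookkeeping in the second paragraph: one must be careful that Gateaux smoothness of $\X$ is precisely the right hypothesis to conclude uniqueness of the norming functional at the unit vector $u$, and that this applies even though $f$ and $g$ are derivatives of the quasihyperbolic distance rather than of a norm. The resolution is that Theorem~\ref{thm: unique_smooth} has already done the analytic work — it tells us $d(x)f$ and $d(x)g$ are norm-one functionals norming the same norm-one vector $u$ — so at this stage the statement is purely about the geometry of $\X$, where Gateaux smoothness (no corner of the unit ball at $u$, i.e. a unique supporting functional) gives $d(x)f = d(x)g$ directly. A minor secondary point to check is that $d(x) > 0$ so dividing is legitimate, which holds since $x \in \Omega$ is an interior point.
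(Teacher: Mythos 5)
Your proposal is correct and follows essentially the same route as the paper: apply Theorem~\ref{thm: unique_smooth} to both $\gamma$ and the subgeodesic $\gamma|_{[t,\ell]}$ to see that $d(x)(k(x_0,\cdot))'(x)$ and $d(x)(k(y,\cdot))'(x)$ are norm-one functionals attaining their norm at the common unit vector $\gamma'(\ell)/d(x)$, and then conclude equality from Gateaux smoothness. The uniqueness-of-the-supporting-functional step you invoke is exactly the \v{S}mulyan lemma the paper cites, and your explicit check that the subpath is a geodesic with the same left derivative at $\ell$ is a detail the paper leaves implicit.
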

\begin{proof}
The functionals $d(x)(k (x_0 , \cdot)' (x)),\ d(x)(k (y , \cdot)' (x)) \in \S_{\X^*}$ attain their norm at
$\frac{\gamma' (\ell )}{d(x)}$ according to Theorem \ref{thm: unique_smooth} and its proof.
Thus, the Gateaux smoothness of $\X$ together with the Smulyan lemma yields that these functionals coincide.
\end{proof}

The following result is an immediate consequence of the smoothness of quasihyperbolic balls.
 
\begin{corollary}
Suppose that $\X$ is a uniformly smooth Banach space and $\Omega \subset \X$ is a convex domain. 
Then the geometric conclusion of the previous result holds: 
If $B_k (y,s) \subset B_k (x,r)$ and $z \in S_k (x,r) \cap S_k (y,s)$ then the quasihyperbolic spheres
$S_k (x,r)$ and $S_k (y,s)$ have the same tangent space at $z$.
\end{corollary}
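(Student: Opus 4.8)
The plan is to reduce this to the previous corollary, whose hypotheses are Gateaux differentiability of the two quasihyperbolic distance functions at $z$ together with existence of a one-sided derivative for a geodesic through $z$, and whose conclusion is exactly the coincidence of the tangent spaces of the two spheres. So the work is to verify that uniform smoothness of $\X$ plus convexity of $\Omega$ supplies these hypotheses. First I would note that in a uniformly smooth space every equivalent norm need not be smooth, but the relevant differentiability here is of the functions $k(x,\cdot)$ and $k(y,\cdot)$, which I would argue are Gateaux (indeed Fréchet) differentiable away from their centers: in a convex domain of a strictly convex reflexive space the geodesics are unique (as recalled in the Preliminaries), the balls $B_k(x,r)$ are strictly convex, and under uniform smoothness one gets that the quasihyperbolic spheres are smooth, i.e. support functionals are unique — this is the "smoothness of quasihyperbolic balls" alluded to in the sentence preceding the corollary, and I would cite \cite{RT, RasilaTalponen12} for it. Thus $k(x,\cdot)'(z)$ and $k(y,\cdot)'(z)$ exist.

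Next I would produce the geodesic with a one-sided derivative at $z$. Given $z\in S_k(x,r)\cap S_k(y,s)$ with $B_k(y,s)\subset B_k(x,r)$, take the unit-speed quasihyperbolic geodesic $\gamma\colon[0,r]\to\Omega$ from $x$ to $z$; by uniqueness and existence of geodesics in this setting (again \cite{RasilaTalponen12, RT}) this $\gamma$ exists, and in a uniformly smooth space the geodesic is $C^1$ on $(0,r]$ including a left derivative at the endpoint $r$ — or at minimum one argues the left derivative $\gamma'(r)=\lim_{t\to r^-}(\gamma(r)-\gamma(t))/(r-t)$ exists, which is all Theorem~\ref{thm: unique_smooth} and the previous corollary require. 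Now I claim $\gamma$ restricted to a terminal segment is also, after reparametrization, the geodesic from some interior point to $z$: pick $w=\gamma(r-s)$; since $k(x,z)=r$ and $k(x,w)=r-s$, the triangle inequality and the geodesic property give $k(w,z)=s$, and because $B_k(y,s)\subset B_k(x,r)$ with $z$ on both spheres, one checks $w$ plays the role of the interior point $y$ in the corollary's statement up to the identification that both $k(w,\cdot)$ and $k(y,\cdot)$ are smooth at $z$ with support functionals normalized the same way. Applying the previous corollary with $x_0=x$, this interior point, and geodesic $\gamma$ gives that the tangent spaces of $S_k(x,r)$ and $S_k(w,s)$ coincide at $z$; a final step identifies the tangent space of $S_k(w,s)$ with that of $S_k(y,s)$ using that both spheres pass through $z$, are nested inside $B_k(x,r)$, and are smooth there, so they share the unique supporting hyperplane.

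I expect the main obstacle to be the last identification — passing from the auxiliary center $w$ lying on the geodesic $\gamma$ to the given center $y$. The clean way is to avoid introducing $w$ at all: instead run Theorem~\ref{thm: unique_smooth} directly with $x_0=y$, using the geodesic $\delta\colon[0,s]\to\Omega$ from $y$ to $z$, to conclude that $d(z)\,k(y,\cdot)'(z)$ attains its norm at $\delta'(s)/d(z)\in\S_\X$; and run it again with $x_0=x$ and geodesic $\gamma$ to conclude $d(z)\,k(x,\cdot)'(z)$ attains its norm at $\gamma'(z)/d(z)$. The remaining point is that these two norm-one vectors coincide: this is where $B_k(y,s)\subset B_k(x,r)$ enters, forcing the two spheres to be tangent at $z$ from the same side, hence their outward geodesic directions at $z$ agree. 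Once the attaining vectors coincide, uniform smoothness of $\X$ and the Šmulyan lemma force $k(x,\cdot)'(z)$ and $k(y,\cdot)'(z)$ to be proportional (both nonzero), and since each is the derivative of a distance function whose level set through $z$ is the respective sphere, their kernels — the tangent spaces — coincide. I would present the proof in roughly this order: (1) invoke existence/uniqueness and $C^1$-regularity of geodesics and smoothness of balls in the uniformly smooth convex setting; (2) apply Theorem~\ref{thm: unique_smooth} to each center; (3) use the nesting $B_k(y,s)\subset B_k(x,r)$ to equate the two geodesic directions at $z$; (4) conclude via Šmulyan as in the previous corollary.
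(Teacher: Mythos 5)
Your proposal is considerably more elaborate than what is needed, and its preferred route has a genuine gap at the step you label (3). You assert that the nesting $B_k(y,s)\subset B_k(x,r)$ forces the arrival directions $\gamma'(r)/d(z)$ and $\delta'(s)/d(z)$ of the two geodesics at $z$ to coincide, and you then want to feed this into \v{S}mulyan to equate the derivatives. But nothing in the hypotheses delivers that coincidence of directions: internal tangency of two convex bodies at $z$ constrains supporting \emph{functionals}, not norm-attaining \emph{vectors}, and passing from "the two derivative functionals are proportional" to "their norm-attaining unit vectors coincide" (or conversely) requires strict convexity of $\X$, which uniform smoothness does not provide. For the same reason your repeated appeals to uniqueness of geodesics ("strictly convex reflexive space") are unjustified here. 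The detour through the auxiliary point $w=\gamma(r-s)$ has the same problem in a different place: the "final identification" of the tangent spaces of $S_k(w,s)$ and $S_k(y,s)$ is exactly the nontrivial content, and the previous corollary (which needs $y$ to lie \emph{on} the geodesic from $x$ to $z$) does not apply to the given $y$.

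The irony is that the one correct observation in your write-up --- that a sphere nested inside $B_k(x,r)$ and touching it at $z$ must share the supporting hyperplane of $B_k(x,r)$ at $z$, which is unique by smoothness --- already \emph{is} the entire proof, applied directly to $S_k(y,s)$ and $S_k(x,r)$ with no geodesics, no Theorem \ref{thm: unique_smooth}, and no auxiliary point $w$. That is what the paper does: in a convex domain the quasihyperbolic balls are convex; by Theorem 2.7 of \cite{KRT2}, uniform smoothness of $\X$ makes $k(x,\cdot)$ and $k(y,\cdot)$ continuously Fr\'echet differentiable away from their centers, so each of the two convex balls has a unique tangent space at $z$; and since $B_k(y,s)\subset B_k(x,r)$ with $z$ on both boundaries, the (unique) supporting hyperplane of the larger ball at $z$ also supports the smaller one there, hence equals its unique tangent space. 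You should cite the Fr\'echet differentiability result from \cite{KRT2} explicitly rather than gesturing at "smoothness of quasihyperbolic balls" in \cite{RT,RasilaTalponen12}, and discard the geodesic-direction argument entirely.
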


In particular, the assumption on $\X$ is valid above if $\X$ is a Hilbert space, e.g. $\R^n$ with the usual norm.

\begin{proof}
The statement follows from the fact that under the assumptions for any $x_0 \in \Omega$ the quasihyperbolic metric $k(x_0 ,\cdot)$ is continuously Frechet differentible in $\Omega$ away from $x_0$, see Theorem 2.7 in \cite{KRT2}. 
Indeed, recall that the quasihyperbolic balls in convex domains are convex. This means that the tangent space $T$ of $B_k (x,r)$
at $z$ exists. Moreover, the tangent space is unique according to the Frechet differentiablility of $k(x_0 ,\cdot)$. 
Similar fact holds also for $B_k (y,s)$. Clearly $T$ is also the unique tangent space of 
$B_k (y,s) \subset B_k (x,r)$ at $z$.
\end{proof}

\section{Final remarks}

Professor Beata Randrianantoanina has asked in a personal communication the following general question relating the geometry of Banach spaces to the properties of quasihyperbolic manifolds modeled on these spaces. 

\begin{problem}
Which Banach space properties can be characterized by the corresponding quasihyperbolic metric induced norm, following
\cite{KRT2}? 
\end{problem}
The authors have also previously risen the question about the characterization of reflexive Banach spaces in terms of quasihyperbolic metrics. Namely, every reflexive Banach space has the property that for every convex domain the corresponding quasihyperbolic metric space is geodesic, see \cite{RasilaTalponen12}. 
\begin{problem}
Does the converse implication hold?
\end{problem}

It is useful to observe that if \eqref{eq:  unit} holds suitably asymptotically, for a pair of sequences of paths, then  the conlusion holds asymptotically as well.
This is convenient in particular in the setting of uniformly convex spaces.

Corrigendum. The following result appears in \cite{RasilaTalponen12}.
\begin{theorem}
Let $\X$ be a Banach space, $\Omega\subsetneq\X$ a domain.
Then each $j$-ball  $\B_{j}(x_{0},r),\ x_{0}\in \Omega,$ is starlike for radii $r\leq \log 2$.
\end{theorem}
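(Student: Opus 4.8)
The plan is to argue directly from the definition of the distance ratio ($j$-)metric,
\[
j_\Omega(x,y)=\log\!\left(1+\frac{\|x-y\|}{\min\{d(x),d(y)\}}\right),
\]
and to prove the pointwise strengthening: if $x_0,x\in\Omega$ with $j_\Omega(x_0,x)\le\log 2$, then the segment $z_t:=x_0+t(x-x_0)$, $t\in[0,1]$, lies in $\Omega$ and $j_\Omega(x_0,z_t)\le j_\Omega(x_0,x)$ for every $t$. Granting this, for $r\le\log 2$ and $x\in\B_j(x_0,r)$ we get $j_\Omega(x_0,z_t)\le j_\Omega(x_0,x)\le r$, so $[x_0,x]\subset\B_j(x_0,r)$, i.e.\ $\B_j(x_0,r)$ is starlike with respect to $x_0$. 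Writing $a:=\|x-x_0\|$ and $m:=\min\{d(x_0),d(x)\}$, the key observation is that $j_\Omega(x_0,x)\le\log 2$ is equivalent to $a\le m$; this is exactly the inequality used below and explains the threshold $\log 2$. The cases $a=0$ and $t=0$ are trivial (then $z_t=x_0$), so I assume $a>0$, $t\in(0,1]$.

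I would first check that the segment stays in $\Omega$. Since $d(\cdot)=\dist(\cdot,\partial\Omega)$ is $1$-Lipschitz on $\X$ (no convexity of $\Omega$ required),
\[
d(z_t)\ \ge\ d(x)-\|z_t-x\|\ =\ d(x)-(1-t)a\ \ge\ m-(1-t)a\ \ge\ a-(1-t)a\ =\ ta\ >\ 0 ,
\]
using $m\le d(x)$ and $a\le m$; hence $z_t\notin\partial\Omega$, and as $[x_0,x]$ is connected, meets $\Omega$ at $x_0$, and avoids $\partial\Omega$, it lies in $\Omega$. Next I would bound $j_\Omega(x_0,z_t)=\log\bigl(1+ta/\min\{d(x_0),d(z_t)\}\bigr)$ by cases. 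If $d(z_t)\ge d(x_0)$ then $\min\{d(x_0),d(z_t)\}=d(x_0)\ge m$, so $ta/d(x_0)\le a/m$. If $d(z_t)<d(x_0)$ then the minimum is $d(z_t)$, and by the Lipschitz lower bound (with $d(x)-(1-t)a\ge ta>0$),
\[
\frac{ta}{d(z_t)}\ \le\ \frac{ta}{d(x)-(1-t)a}\ \le\ \frac{a}{m} ,
\]
the last inequality being, after clearing positive denominators and cancelling $a$, just $tm+(1-t)a\le d(x)$, which holds because $tm+(1-t)a\le tm+(1-t)m=m\le d(x)$. In both cases $j_\Omega(x_0,z_t)\le\log(1+a/m)=j_\Omega(x_0,x)$.

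I do not expect a serious obstacle; once organised this way the proof is short. The delicate point — and plausibly the slip in the original argument — is the case $d(z_t)<d(x_0)$: the naive bound $d(z_t)\ge d(x_0)-ta$ does \emph{not} suffice (it fails for $t$ near $1$), and one must use instead the bound $d(z_t)\ge d(x)-(1-t)a$ measured from $x$; it is precisely here that the hypothesis $a\le m$, equivalently $r\le\log 2$, is consumed. A second point worth making explicit, since $\Omega$ is an arbitrary domain, is the verification that $[x_0,x]\subset\Omega$. The remaining items — strict positivity of the denominators for $t\in(0,1]$, the trivial cases $t=0$ and $a=0$, and the observation that the argument works verbatim for open and for closed $j$-balls — are routine.
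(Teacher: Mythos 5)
Your proof is correct and rests on the same mechanism as the paper's corrected argument: the equivalence of $j_\Omega(x_0,x)\le\log 2$ with $\|x_0-x\|\le d(x_0)\wedge d(x)$, together with the Lipschitz lower bound $d(z_t)\ge d(x)-(1-t)\|x_0-x\|$ measured from the far endpoint. Your write-up is in fact more complete on two points worth keeping: you verify explicitly that the segment stays in $\Omega$ (the paper does not), and your case analysis yields the monotonicity $j_\Omega(x_0,z_t)\le j_\Omega(x_0,x)$, which is what starlikeness of $\B_j(x_0,r)$ for $r<\log 2$ actually requires, whereas the paper's displayed chain of inequalities only concludes $j_\Omega(x_0,z_t)\le\log 2$.
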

Although the statement of the theorem is correct, the proof contains a blunder
with the grouping of terms. The corrected proof goes as follows.

\begin{proof}
Let $x_{0},y\in \Omega$,  $x_{0}\neq y$, such that $j(x_{0},y)\leq \log 2$. This is to say that
\[
\frac{\|x_{0}-y\|}{d(x_{0})\wedge d(y)}\leq 1.
\]
By using simple calculations involving the triangle inequality we get
\begin{eqnarray*}
j(x_{0},ty+(1-t)x_{0})&\leq &\log\left(1+\frac{t\|x_{0}-y\|}{d(x_{0})\wedge (d(y)-(1-t)\|x_{0}-y\|)}\right)\\
&\leq &\log\left(1+\frac{t\|x_{0}-y\|}{t\|x_{0}-y\|}\right)=\log 2,\quad t>0 .
\end{eqnarray*}

\end{proof}

\subsection{Acknowledgments}
This research was supported by the Academy of Finland Project \#268009. The second named author was also financially supported by the Finnish Cultural Foundation and V\"{a}is\"{a}l\"{a} Foundation. The third named author was also supported by NNSF of China Project No. 11601485.

\end{document}